\documentclass[a4paper,twoside,reqno]{amsart}

\usepackage[pagewise]{lineno}
\usepackage[utf8]{inputenc}


\usepackage{authblk,orcidlink}
\usepackage{hyperref}
\hypersetup{urlcolor=blue, citecolor=red}
\usepackage{amsmath,amsthm,amssymb,xcolor,bbm,mathrsfs,graphicx, float}
\usepackage[english]{babel}
\usepackage{fancyhdr}\setlength{\headheight}{23pt}
\pagestyle{fancy}
\fancyhf{}
\fancyhead[CE]{Hsin-Lun Li}
\fancyhead[CO]{The multilayer garbage disposal game}
\fancyfoot[CE,CO]{\thepage}

\makeatletter
\@namedef{subjclassname@1991}{2020 Mathematics Subject Classification}
\makeatother
\subjclass{91C20, 91D25, 94C15}
\keywords{Multilayer garbage disposal game, social networks, averaging dynamics}

\title{The multilayer garbage disposal game}
\author{Hsin-Lun Li}

\date{}
\email{hsinlunl@math.nsysu.edu.tw}

\theoremstyle{definition}
\newtheorem{theorem}{Theorem}

\newtheorem{lemma}[theorem]{Lemma}

\begin{document}

\allowdisplaybreaks

\thispagestyle{firstpage}
\maketitle
\begin{center}
    Hsin-Lun Li \orcidlink{0000-0003-1497-1599}
    \centerline{$^1$National Sun Yat-sen University, Kaohsiung 804, Taiwan}
\end{center}
\medskip

\begin{abstract}\sloppy
The multilayer garbage disposal game is an evolution of the garbage disposal game. Each layer represents a social relationship within a system of finitely many individuals and finitely many layers. An agent can redistribute their garbage and offload it onto their social neighbors in each layer at each time step. We study the game from a mathematical perspective rather than applying game theory. We investigate the scenario where all agents choose to average their garbage before offloading an equal proportion of it onto their social neighbors. It turns out that the garbage amounts of all agents in all layers converge to the initial average of all agents across all layers when all social graphs are connected and have order at least three. This implies that the winners are those agents whose initial total garbage exceeds the average total garbage across all agents.

\end{abstract}

\section{Introduction}
A garbage disposal game consists of a finite number of agents, each of whom processes a garbage amount, represented as a nonnegative real number. Each agent updates their garbage either by offloading it onto others or by receiving garbage from others at each time step~\cite{hirai2006coalition}. Interpreted mathematically, consider an $n$-individual garbage disposal game. Let $x_i(t)$ denote the garbage amount of individual $i$ at time $t$, and let $M_n(S)$ represent the collection of all $n \times n$ matrices whose entries belong to the set $S$. The evolution rule is given by:
$$x(t+1) = A(t)x(t),$$
where
\begin{align*}
    x(t) &= (x_1(t), \ldots, x_n(t))^T= \text{transpose of } (x_1(t), \ldots, x_n(t)), \\
    A(t) &\in M_n(\mathbb{R}_{\geq 0}) \ \text{is column stochastic.}
\end{align*}

Assume that garbage represents bad things, whether abstract, such as unhappiness, or tangible, such as excrement, with a norm assigned to quantify each as a value. This allows for the calculation of an individual's total garbage amount. Under different social relationships, individuals have opportunities to offload garbage onto their social neighbors or receive garbage from them. For instance, in friendships or counterparty relationships, individuals can offload garbage, such as unhappiness, onto their social neighbors or receive garbage from them. The garbage disposal game fundamentally differs from opinion dynamics~\cite{lanchier2022consensus,mHK,mHK2,li2024imitation,li2024mixed}. In opinion dynamics, an agent can modify their opinion without directly influencing others. However, in the garbage disposal game, offloading garbage onto others directly impacts their garbage amounts.

In this paper, we propose a multilayer garbage disposal game, where each layer represents a social relationship, such as a friendship. Agents can redistribute their garbage across all layers, as well as offload it onto their social neighbors within the same layer at each time step. We consider the case in which each individual averages their garbage across all layers and offloads an equal proportion of garbage onto their social neighbors in each layer. Interpreting this mathematically, consider a system of $n$ individuals with $m$ layers. Let the undirected simple graph $G_k = ([n], E_k)$ be the social graph depicting the social relationship in layer $k$, with vertex set $[n] = \{1, \ldots, n\}$ and edge set $E_k$. Edge $(i, j) \in E_k$ if agents $i$ and $j$ are \emph{socially connected} in layer $k$. Let $x_{ji}(t)$ denote the amount of garbage for agent $i$ on layer $j$ at time $t$, and let 
$$\bar{x}_i(t) = \frac{1}{m} \sum_{k \in [m]} x_{k i}(t)$$ represent the average amount of garbage for agent \(i\) across all layers at time $t$. $N_{ji}=\{k: (k,i)\in E_j\}$ is the neighborhood of agent $i$ in layer $j.$ The update rule is as follows:

\begin{equation}\label{Model:multilayer}
    x_{ji}(t+1)=\bar{x}_i(t)(1-\frac{|N_{ji}|}{|E_j|})+\frac{1}{|E_j|}\sum_{\ell\in N_{ji}}\bar{x}_{\ell}(t)\ \hbox{for all}\ i\in [n]\ \hbox{and}\ j\in [m].
\end{equation}
According to the Six Degrees of Separation concept introduced by Frigyes  Karinthy, the length of the shortest path between any two individuals in a social graph is at most six. Later, the concept of Four Degrees of Separation was proposed based on an analysis of the entire Facebook network of active users~\cite{backstrom2012four}. Hence, it is natural to assume that the social graphs are connected. We investigate the eventual garbage amount distribution of all individuals across all layers.

Let $I$ be the identity matrix and $\mathbbm{1}$ be the column vector with all entries equal to 1. A symmetric matrix $M$ is called a \emph{generalized Laplacian} of a graph $G = (V, E)$ if, for $x, y \in V$, the following two conditions hold:
\[
M_{xy} = 0 \quad \text{for} \quad x \neq y \quad \text{and} \quad (x,y) \notin E, 
\]
and
\[
M_{xy} < 0 \quad \text{for} \quad x \neq y \quad \text{and} \quad (x,y) \in E.
\]
 The Laplacian of a graph $G$ is given by $\mathscr{L} = D_G - A_G$, where $D_G$ is the degree matrix of $G$ and $A_G$ is the adjacency matrix of $G$. Therefore, a Laplacian is a generalized Laplacian. Let $\lambda_k(A)$ denote the $k$th smallest eigenvalue of a real symmetric matrix $A$.

\section{Main results}
 Theorem~\ref{Thm:multilayer} reveals that when all social graphs in all layers are connected and have an order of at least three, the garbage amounts of all agents in all layers converge to the initial average of all agents across all layers. In particular, in the connected single-layer garbage disposal game, averaging is equivalent to not averaging. The garbage amounts of all agents converge to the initial average across all agents. Specifically, the winners are those agents whose garbage amounts are above the initial average, while the losers are those whose garbage amounts are below the initial average. 

In the multilayer garbage disposal game, concentrating one's garbage in a particular layer implies that one's garbage amounts in other layers are low. When all agents choose to average their garbage in all layers before offloading it onto their social neighbors, the winners are those whose initial total garbage exceeds the average total garbage among all agents.  

\begin{theorem}\label{Thm:multilayer}
    Assume that \( G_k = ([n], E_k) \), \( k \in [m] \), are connected with \( n \geq 3 \). Then, \( x_{ji}(t) \to \frac{1}{mn} \sum_{j \in [m],\ i \in [n]} x_{ji}(0) \) as \( t \to \infty \) for all \( j \in [m] \) and \( i \in [n] \).
\end{theorem}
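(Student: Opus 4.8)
The plan is to collapse the $mn$-dimensional dynamics onto an autonomous averaging recursion for the $n$ layer-averages, and then read off convergence from the spectrum of the governing matrix. The crucial structural observation is that the right-hand side of~\eqref{Model:multilayer} depends on $x(t)$ only through the averages $\bar x_i(t)$. Hence, averaging~\eqref{Model:multilayer} over $j\in[m]$ and setting $\bar x(t)=(\bar x_1(t),\dots,\bar x_n(t))^T$, I would derive a closed recursion $\bar x(t+1)=P\bar x(t)$ with
\[
P_{ii}=1-\frac1m\sum_{j\in[m]}\frac{|N_{ji}|}{|E_j|},\qquad P_{i\ell}=\frac1m\sum_{j:(i,\ell)\in E_j}\frac{1}{|E_j|}\quad(\ell\neq i).
\]
Since the graphs are undirected, $P$ is symmetric; using $\sum_{\ell\neq i}\indicator{(i,\ell)\in E_j}=|N_{ji}|$ one checks each row sums to $1$, so $P$ is doubly stochastic; and because the edges incident to $i$ are distinct edges of $G_j$ one has $|N_{ji}|\le|E_j|$, making every entry of $P$ nonnegative.

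Next I would analyze the spectrum of $P$ through the matrix $L:=I-P$. It is symmetric, has zero row sums, and satisfies $L_{i\ell}<0$ exactly when $(i,\ell)\in E_j$ for some $j$; thus $L$ is a weighted Laplacian of the union graph $([n],\bigcup_j E_j)$, which is a generalized Laplacian in the sense defined above. Since each $G_j$ is connected, the union graph is connected, so $\lambda_1(L)=0$ is simple with eigenvector $\mathbbm{1}$. Equivalently, $1$ is a simple eigenvalue of $P$ with eigenvector $\mathbbm{1}$, and every other eigenvalue $\lambda$ satisfies $|\lambda|\le1$. To force $P^t$ to converge rather than oscillate I must exclude the eigenvalue $-1$, i.e. prove $\lambda_n(L)<2$.

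This exclusion is the main obstacle and is precisely where $n\ge3$ enters. The clean route is to show $P$ has a strictly positive diagonal entry, so that, together with irreducibility (connectedness of the union graph), $P$ is primitive and all eigenvalues except $1$ lie in $(-1,1)$. Reading off $P_{ii}=0$ forces every layer $G_j$ to be a star centred at $i$; for $n\ge3$ a star has a unique centre, so at most one index can yield a vanishing diagonal entry, and any leaf vertex $\ell$ then gives $P_{\ell\ell}=1-\frac1{n-1}>0$. Hence some $P_{ii}>0$. The hypothesis is sharp: for $n=2$ every connected graph is a single edge, so $P=\left(\begin{smallmatrix}0&1\\1&0\end{smallmatrix}\right)$ and $\bar x(t)$ genuinely oscillates. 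With primitivity in hand, the Perron--Frobenius theorem (or the symmetric spectral decomposition) gives $P^t\to\frac1n\mathbbm{1}\mathbbm{1}^T$, so $\bar x_i(t)\to c:=\frac1n\sum_{i\in[n]}\bar x_i(0)=\frac1{mn}\sum_{j\in[m],\,i\in[n]}x_{ji}(0)$ for every $i$.

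Finally I would substitute this limit back into~\eqref{Model:multilayer}. Letting $t\to\infty$ and using $\bar x_i(t),\bar x_\ell(t)\to c$ yields
\[
x_{ji}(t+1)\to c\Bigl(1-\tfrac{|N_{ji}|}{|E_j|}\Bigr)+\tfrac{1}{|E_j|}\,|N_{ji}|\,c=c,
\]
which is exactly the claimed common limit $\frac1{mn}\sum_{j\in[m],\,i\in[n]}x_{ji}(0)$ for all $j\in[m]$ and $i\in[n]$. I expect the routine parts (verifying double stochasticity and the Laplacian identification) to be short, and the only delicate step to be the aperiodicity argument ruling out $\lambda_n(L)=2$ via the $n\ge3$ hypothesis.
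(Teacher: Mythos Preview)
Your argument is correct and arrives at the same governing matrix as the paper---your $P$ is exactly the paper's $C=I-\frac1m\sum_{j}\frac{\mathscr{L}_j}{|E_j|}$---but the two proofs reach and analyze it differently. The paper works in the full $mn$-dimensional system, forms the block matrix $B$, computes $B^t$ explicitly, and thereby reduces to powers of $C$; you instead observe directly that the right-hand side of~\eqref{Model:multilayer} depends only on the layer averages, which is a cleaner route to the same reduction. More substantively, the paper rules out the eigenvalue $-1$ quantitatively: it combines Lemma~\ref{lemma:largest eigenvalue of a Laplacian} (giving $\lambda_n(\mathscr{L}_j)\le n$) with Weyl's inequalities (Lemma~\ref{lemma:sublinear}) and $|E_j|\ge n-1$ to obtain the explicit bound $\lambda_1(C)\ge -\tfrac{1}{n-1}$. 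Your aperiodicity argument is purely combinatorial---$P_{ii}=0$ forces every $G_j$ to be a star centred at $i$, impossible for two distinct centres when $n\ge3$---and therefore bypasses Lemmas~\ref{lemma:largest eigenvalue of a Laplacian} and~\ref{lemma:sublinear} entirely. The trade-off is that the paper's bound immediately yields a geometric convergence rate $\max(|\lambda_1(C)|,\lambda_{n-1}(C))\le\max(\tfrac{1}{n-1},1-\lambda_2(L))$, whereas your primitivity argument gives convergence without an explicit rate.
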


\section{The multilayer garbage disposal game}
Since~\eqref{Model:multilayer} in matrix form involves Laplacians, the following lemmas outline properties of symmetric matrices and Laplacians, which are instrumental in deriving the main results.
\begin{lemma}\label{lemma:eigenvalue}
    Let $A \in M_n(\mathbb{R})$ be symmetric. Then for all $c_1\in \mathbb{R},$
    $$\begin{array}{ll}
    \displaystyle    \lambda_k(c_1 I - c_2 A) = c_1 - c_2 \lambda_{n+1-k}(A)& \quad \text{for}\ c_2 \geq 0,\vspace{2pt} \\
   \displaystyle \lambda_k(c_1 I - c_2 A) = c_1 - c_2\lambda_k(A) &\quad \text{for}\ c_2 < 0.
    \end{array}$$
\end{lemma}

\begin{proof}
    Observe that
\begin{align*}
    A v = \lambda v \iff \left(c_1 I - c_2 A\right)v = \left(c_1 - c_2 \lambda\right)v.
\end{align*}
Also,
\begin{align*}
    &c_1 - c_2\lambda_1(A) \geq \ldots \geq c_1 - c_2\lambda_n(A) \quad \text{for } c_2 \geq 0, \\
    &c_1 - c_2\lambda_1(A) \leq \ldots \leq c_1 - c_2\lambda_n(A) \quad \text{for } c_2 < 0.
\end{align*}
This completes the proof.

\end{proof}

\begin{lemma}[\cite{das2003improved}]\label{lemma:largest eigenvalue of a Laplacian}
    The largest eigenvalue $\lambda_{\max}$ of the Laplacian of a simple undirected graph $G = (V, E)$ satisfies
    \begin{equation*}
        \lambda_{\max} \leq \max\{|N_i \cup N_j| : i, j \in V \ \text{and}\ (i, j) \in E\},
    \end{equation*}
    where $N_i = \{j \in V : (i, j) \in E\}$ is the set of all neighbors of vertex $i$ in $G$.
\end{lemma}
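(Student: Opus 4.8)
The bound is the Das refinement of the classical Anderson--Morley inequality $\lambda_{\max}\le\max_{(i,j)\in E}(d_i+d_j)$, where $d_i=|N_i|$, and the plan is to work throughout with the Laplacian $\mathscr{L}=D_G-A_G$ itself rather than with the signless Laplacian $D_G+A_G$. I would stress this point first: passing to $D_G+A_G$ is tempting because its quadratic form $\sum_{(i,j)\in E}(x_i+x_j)^2$ is amenable to Perron--Frobenius, but it is too lossy here --- already for a triangle one has $\lambda_{\max}(D_G+A_G)=4$ while $\max_{(i,j)\in E}|N_i\cup N_j|=3$ --- so the sign structure of $\mathscr{L}$ must be retained.

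The backbone estimate uses the eigen-equation for the top eigenpair $(\lambda_{\max},x)$, written as $\lambda_{\max}x_i=\sum_{k\in N_i}(x_i-x_k)$. Applying Cauchy--Schwarz vertexwise, $(\lambda_{\max}x_i)^2\le d_i\sum_{k\in N_i}(x_i-x_k)^2$, and summing over $i$ gives
\[
\lambda_{\max}^2\,\norm{x}^2\le\sum_{i\in[n]}d_i\sum_{k\in N_i}(x_i-x_k)^2=\sum_{(i,j)\in E}(d_i+d_j)(x_i-x_j)^2\le\Big(\max_{(i,j)\in E}(d_i+d_j)\Big)\sum_{(i,j)\in E}(x_i-x_j)^2.
\]
Since the last sum equals $x^T\mathscr{L}x=\lambda_{\max}\norm{x}^2$, this already yields the Anderson--Morley bound $\lambda_{\max}\le\max_{(i,j)\in E}(d_i+d_j)$. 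The remaining task is to replace the edge weight $d_i+d_j$ by $|N_i\cup N_j|=d_i+d_j-|N_i\cap N_j|$.

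The mechanism that produces the intersection term is the cancellation of common neighbors along an edge: subtracting the eigen-equations at the two endpoints of $(i,j)\in E$ gives
\[
\lambda_{\max}(x_i-x_j)=d_ix_i-d_jx_j+\sum_{k\in N_j\setminus N_i}x_k-\sum_{k\in N_i\setminus N_j}x_k,
\]
in which every common neighbor $k\in N_i\cap N_j$ drops out, so that only indices in $N_i\triangle N_j\subseteq N_i\cup N_j$ survive. I would therefore refine the vertexwise Cauchy--Schwarz above into an edgewise estimate: group the two neighbor sums around an edge, use the cancellation to restrict the effective index set to $N_i\cup N_j$, and thereby obtain a sharpened per-edge weight $w_{ij}\le|N_i\cup N_j|$ in place of $d_i+d_j$. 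Feeding $w_{ij}$ into the summation above and dividing by $\lambda_{\max}\norm{x}^2$ then gives $\lambda_{\max}\le\max_{(i,j)\in E}|N_i\cup N_j|$.

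The main obstacle is precisely this last refinement. The difficulty is twofold: the top eigenvector of $\mathscr{L}$ is orthogonal to $\mathbbm{1}$ and hence changes sign, so no positivity is available to simplify the signed sums; and the Cauchy--Schwarz must be arranged so that the common-neighbor contributions are removed exactly once (yielding $|N_i\cup N_j|$) without either double-counting them, which would return the weaker $d_i+d_j$, or over-cancelling. Making this bookkeeping precise --- identifying the surviving index set and matching its cardinality to $|N_i\cup N_j|$ --- is the technical heart of the argument and is carried out in \cite{das2003improved}.
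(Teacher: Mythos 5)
The paper itself offers no proof of this lemma---it is quoted verbatim from \cite{das2003improved}---so your attempt has to stand on its own, and it does not. Your first stage is correct and complete: vertexwise Cauchy--Schwarz on $\lambda_{\max}x_i=\sum_{k\in N_i}(x_i-x_k)$, summed over $i$ and divided through by $x^T\mathscr{L}x=\lambda_{\max}\norm{x}^2$, proves the Anderson--Morley bound $\max_{(i,j)\in E}(d_i+d_j)$, and your triangle example correctly shows the signless-Laplacian shortcut cannot reach the stated bound. But the entire content of the lemma beyond Anderson--Morley is the subtraction of $|N_i\cap N_j|$, and that is exactly the step you do not carry out: you describe an intended ``edgewise refinement'' and then defer the ``technical heart'' to \cite{das2003improved}, which is a citation, not a proof. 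Worse, the route you sketch looks like it would not close even with effort: the subtracted identity $\lambda_{\max}(x_i-x_j)=d_ix_i-d_jx_j+\sum_{k\in N_j\setminus N_i}x_k-\sum_{k\in N_i\setminus N_j}x_k$ carries coefficients $d_i$ and $d_j$ on $x_i$ and $x_j$, so Cauchy--Schwarz applied to it produces a per-edge factor of the shape $d_i^2+d_j^2+|N_i\triangle N_j|$, not $|N_i\cup N_j|$, paired with $x_i^2+x_j^2+\sum_{k\in N_i\triangle N_j}x_k^2$, which is no longer a piece of the quadratic form $\sum_{(i,j)\in E}(x_i-x_j)^2$; the division by $\lambda_{\max}\norm{x}^2$ that closed your first stage is therefore unavailable, and summing over edges degrades rather than refines the bound.

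The cancellation of common neighbors that you correctly identified does finish the proof, but in an $\ell_\infty$ (extremal-edge) argument rather than an $\ell_2$ one. If $\lambda_{\max}>0$, an eigenvector $x$ cannot be constant across every edge (else $\mathscr{L}x=0$), so $M=\max_{(i,j)\in E}|x_i-x_j|>0$; pick an edge $(i,j)$ attaining it, say $x_i-x_j=M$. Writing
\[
\lambda_{\max}M=\sum_{k\in N_i}(x_i-x_k)+\sum_{k\in N_j}(x_k-x_j),
\]
each common neighbor $k\in N_i\cap N_j$ contributes the telescoping pair $(x_i-x_k)+(x_k-x_j)=M$ exactly once, the terms $k=j$ and $k=i$ each contribute $M$, and every remaining term is at most $M$ because it is a difference across an edge. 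Counting terms gives $\lambda_{\max}M\le\bigl(|N_i\cap N_j|+2+(d_i-|N_i\cap N_j|-1)+(d_j-|N_i\cap N_j|-1)\bigr)M=(d_i+d_j-|N_i\cap N_j|)M=|N_i\cup N_j|\,M$, hence $\lambda_{\max}\le\max_{(i,j)\in E}|N_i\cup N_j|$, the case $\lambda_{\max}=0$ being trivial. This maximum-difference argument is essentially the one in \cite{das2003improved}; the moral is that the $-|N_i\cap N_j|$ improvement lives at a single extremal edge, and your plan of distributing it in $\ell_2$ over all edges is where the attempt breaks down.
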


\begin{lemma}[Perron-Frobenius for Laplacians \cite{biyikoglu2007laplacian}]
\label{Lemma:PF_for_Laplacians}
 Assume that~$M$ is a generalized Laplacian of a connected graph.
 Then, the smallest eigenvalue of~$M$ is simple and the corresponding eigenvector can be chosen with all entries positive.
\end{lemma}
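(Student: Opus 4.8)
The plan is to convert $M$ into a nonnegative irreducible matrix by a spectral shift, so that the classical Perron--Frobenius theorem for irreducible nonnegative matrices applies directly. Fix a constant $c > \max_{x} M_{xx}$ and set $B = cI - M$. Because $M$ is a generalized Laplacian, every off-diagonal entry satisfies $M_{xy} \le 0$, with $M_{xy} < 0$ precisely on edges and $M_{xy} = 0$ on non-edges, so $B_{xy} = -M_{xy} \ge 0$ off the diagonal; the choice of $c$ makes $B_{xx} = c - M_{xx} > 0$. Hence $B$ is entrywise nonnegative, and its nonzero off-diagonal pattern coincides exactly with the edge set of the graph. Since the graph is connected, $B$ is \emph{irreducible}.

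Next I would invoke Perron--Frobenius: the spectral radius $\rho(B)$ is a simple eigenvalue of $B$ admitting an eigenvector with all entries strictly positive. The shift is designed so that eigenpairs transfer cleanly: if $Bv = \mu v$ then $Mv = (c - \mu)v$, so $B$ and $M$ share eigenvectors while their eigenvalues are related by $\lambda = c - \mu$. In particular, the largest eigenvalue of $B$ corresponds to $\lambda_1(M)$. Because $M$, and hence $B$, is symmetric, all eigenvalues are real; since the real number $\rho(B)$ dominates every eigenvalue in absolute value it satisfies $\rho(B) \ge \lambda_{\max}(B)$, while being itself an eigenvalue it satisfies $\rho(B) \le \lambda_{\max}(B)$, forcing $\rho(B) = \lambda_{\max}(B)$. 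Translating back, $\lambda_1(M) = c - \rho(B)$ is simple, and the positive Perron eigenvector of $B$ is precisely an eigenvector of $M$ for $\lambda_1(M)$ with all entries positive.

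The one point requiring care is exactly this identification $\rho(B) = \lambda_{\max}(B)$: a priori the spectral radius could be attained at a negative eigenvalue of large modulus, which would break the correspondence with $\lambda_1(M)$. Symmetry is what resolves it, since real eigenvalues and the two-sided inequality above pin $\rho(B)$ to the top of the spectrum. I expect this to be the main (and essentially the only) subtlety, the remaining steps being routine once $B$ is set up.

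Alternatively, I expect a self-contained variational argument to work and to be instructive. Let $v$ minimize the Rayleigh quotient $\langle Mx, x \rangle$ over unit vectors, so $v$ is an eigenvector for $\lambda_1(M)$. Replacing each coordinate by its absolute value leaves the diagonal contribution unchanged and can only decrease each off-diagonal term $M_{xy} x_x x_y$, since $M_{xy} \le 0$ and $|x_x||x_y| \ge x_x x_y$; hence $|v|$ is also a minimizer, i.e.\ a nonnegative eigenvector for $\lambda_1(M)$. Connectedness then forces $|v| > 0$: at any coordinate $x$ where $|v|_x = 0$, the eigenvalue equation gives $\sum_{y \sim x} M_{xy}|v|_y = 0$ with every summand $\le 0$, so all neighbors of $x$ also vanish, and propagating along paths contradicts $v \ne 0$. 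Finally, simplicity follows because any eigenvector $w$ independent of the positive eigenvector $p$ yields, for a suitable scalar $t$, a minimizer $w - t p$ possessing a zero coordinate, whose absolute value would be a nonnegative eigenvector for $\lambda_1(M)$ vanishing somewhere---contradicting the strict positivity just established.
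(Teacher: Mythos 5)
Both of your arguments are correct, but note that the paper itself contains no proof of this lemma at all: it is imported verbatim as a known result from the cited monograph of Biyikoglu, Leydold, and Stadler, so there is no in-paper proof to compare against. Your first argument (the shift $B = cI - M$ with $c > \max_x M_{xx}$, irreducibility of $B$ from connectedness, classical Perron--Frobenius, and transfer of the simple eigenvalue $\rho(B)$ back to $\lambda_1(M) = c - \rho(B)$) is essentially the standard textbook route, and you correctly isolate the one genuine subtlety, namely that symmetry is needed to pin $\rho(B)$ to $\lambda_{\max}(B)$ rather than to a negative eigenvalue of large modulus; without it the correspondence $\lambda_1(M) = c - \rho(B)$ could fail. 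Your second, variational argument is also complete: the inequality $M_{xy}\lvert v_x\rvert\lvert v_y\rvert \le M_{xy} v_x v_y$ (valid precisely because $M_{xy}\le 0$ off the diagonal) shows $\lvert v\rvert$ is again a Rayleigh-quotient minimizer and hence an eigenvector; the propagation argument along edges uses $M_{xy} < 0$ on edges together with connectedness to force strict positivity of any nonnegative eigenvector in the bottom eigenspace; and the simplicity step (subtracting a multiple of the positive eigenvector to create a zero coordinate, then taking absolute values to contradict strict positivity) is sound because $w - tp$ stays in the eigenspace and is nonzero by independence. The variational proof buys you self-containedness and extends with no change to weighted or generalized Laplacians beyond what matrix Perron--Frobenius packaging suggests, while the shift proof is shorter if one is willing to quote Perron--Frobenius for irreducible nonnegative matrices as a black box. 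Either argument would serve as a legitimate substitute for the paper's citation.
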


\begin{lemma}[\cite{horn2012matrix}]\label{lemma:sublinear}
   Let $A$, $B \in M_n(\mathbb{R})$ be symmetric. Then,
\[
\begin{array}{ll}
    \displaystyle \lambda_i(A + B) \leq \lambda_{i+j}(A) + \lambda_{n-j}(B), & \quad j = 0, 1, \dots, n - i \vspace{2pt} \\
    \displaystyle \lambda_i(A+B) \geq \lambda_{i-j+1}(A) + \lambda_j(B), & \quad j = 1, \ldots, i
\end{array}
\]
for all $i \in [n].$
\end{lemma}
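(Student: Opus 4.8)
The plan is to prove both inequalities via the Courant--Fischer variational characterization of the ordered eigenvalues, exploiting the convention that $\lambda_k$ denotes the $k$th \emph{smallest} eigenvalue. For a symmetric $M \in M_n(\mathbb{R})$ write $R_M(x) = x^T M x / x^T x$ for its Rayleigh quotient. I would use
$$\lambda_i(M) = \min_{\dim V = i}\ \max_{0 \neq x \in V} R_M(x) = \max_{\dim V = n-i+1}\ \min_{0 \neq x \in V} R_M(x),$$
where $V$ ranges over subspaces of $\mathbb{R}^n$ of the indicated dimension. The whole argument then reduces to choosing, for each inequality, two subspaces spanned by eigenvectors of $A$ and of $B$ on which the respective Rayleigh quotients are controlled, and intersecting them; the additivity $R_{A+B}(x) = R_A(x) + R_B(x)$ (for $x^T x = 1$) does the rest.

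For the upper bound $\lambda_i(A+B) \leq \lambda_{i+j}(A) + \lambda_{n-j}(B)$, fix $j \in \{0, \dots, n-i\}$ and let $u_1, \dots, u_n$ and $v_1, \dots, v_n$ be orthonormal eigenbases of $A$ and $B$ ordered by increasing eigenvalue. Set $S_1 = \text{span}\{u_1, \dots, u_{i+j}\}$ and $S_2 = \text{span}\{v_1, \dots, v_{n-j}\}$, so that $R_A \leq \lambda_{i+j}(A)$ on $S_1$ and $R_B \leq \lambda_{n-j}(B)$ on $S_2$. Since $\dim S_1 + \dim S_2 = (i+j) + (n-j) = n + i$, the intersection obeys $\dim(S_1 \cap S_2) \geq i$, so it contains a subspace $V$ of dimension exactly $i$. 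On $V$ one has $R_{A+B}(x) \leq \lambda_{i+j}(A) + \lambda_{n-j}(B)$, and feeding $V$ into the min--max formula for $\lambda_i(A+B)$ yields the claim.

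For the lower bound $\lambda_i(A+B) \geq \lambda_{i-j+1}(A) + \lambda_j(B)$, fix $j \in \{1, \dots, i\}$ and run the dual argument with the max--min formula: take $T_1 = \text{span}\{u_{i-j+1}, \dots, u_n\}$ and $T_2 = \text{span}\{v_j, \dots, v_n\}$, on which $R_A \geq \lambda_{i-j+1}(A)$ and $R_B \geq \lambda_j(B)$. Their dimensions are $n-i+j$ and $n-j+1$, summing to $n + (n-i+1)$, so $\dim(T_1 \cap T_2) \geq n-i+1$ and the intersection contains a subspace $W$ of dimension $n-i+1$; the max--min formula then gives $\lambda_i(A+B) \geq \lambda_{i-j+1}(A) + \lambda_j(B)$. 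Alternatively, the lower bound follows from the upper bound applied to $-A$ and $-B$ together with the reflection identity $\lambda_k(-M) = -\lambda_{n+1-k}(M)$ and a relabeling of indices.

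The routine parts are the Rayleigh-quotient monotonicity on eigenvector spans and the quotient additivity $R_{A+B} = R_A + R_B$; the one place demanding care is the index bookkeeping — verifying that the chosen spans have the stated dimensions throughout the allowed ranges of $j$ and that the count $\dim(U_1 \cap U_2) \geq \dim U_1 + \dim U_2 - n$ produces exactly the target subspace dimension ($i$ for the upper bound, $n-i+1$ for the lower). This is the main obstacle, and it is precisely what pins down why $j$ ranges over $\{0, \dots, n-i\}$ in the first inequality and $\{1, \dots, i\}$ in the second.
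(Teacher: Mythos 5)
Your proof is correct: both subspace constructions have the right dimensions, the intersection bound $\dim(S_1\cap S_2)\geq \dim S_1+\dim S_2-n$ delivers a subspace of dimension $i$ (respectively $n-i+1$) exactly as needed for the min--max (respectively max--min) characterization, and the index ranges $j\in\{0,\dots,n-i\}$ and $j\in\{1,\dots,i\}$ keep all eigenvalue subscripts in $[n]$. The paper itself gives no proof of this lemma --- it is Weyl's inequality, quoted from Horn and Johnson --- and your Courant--Fischer intersection argument (including the duality remark via $\lambda_k(-M)=-\lambda_{n+1-k}(M)$) is precisely the standard proof in that reference, so your approach is essentially the intended one.
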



\begin{proof}[\bf Proof of Theorem~\ref{Thm:multilayer}]
Let \( x_{jt} = (x_{j1}(t), \ldots, x_{jn}(t)) \), \( x_t = (x_{1t}, \ldots, x_{mt})^T \), and \( B_j = I - \frac{1}{|E_j|} \mathscr{L}_j \), where \( \mathscr{L}_j \) is the Laplacian on \( G_j = ([n], E_j) \). Letting
\[
B = \begin{bmatrix}
B_1 & B_1 & \cdots & B_1 \\
B_2 & B_2 & \cdots & B_2 \\
\vdots & \vdots & \ddots & \vdots \\
B_m & B_m & \cdots & B_m
\end{bmatrix} \in M_{mn}(\mathbb{R}),
\]
the mechanism of~\eqref{Model:multilayer} in matrix form is
\[
x_{t+1} = \frac{1}{m} B x_t \quad \text{for all} \quad t \geq 0, \quad \text{so} \quad x_t = \frac{B^t}{m^t} x_0.
\]
Observe that
\[
B^t = \begin{bmatrix}
B_1 \left( \sum_{i \in [m]} B_i \right)^{t-1} & B_1 \left( \sum_{i \in [m]} B_i \right)^{t-1} & \cdots & B_1 \left( \sum_{i \in [m]} B_i \right)^{t-1} \\
B_2 \left( \sum_{i \in [m]} B_i \right)^{t-1} & B_2 \left( \sum_{i \in [m]} B_i \right)^{t-1} & \cdots & B_2 \left( \sum_{i \in [m]} B_i \right)^{t-1} \\
\vdots & \vdots & \ddots & \vdots \\
B_m \left( \sum_{i \in [m]} B_i \right)^{t-1} & B_m \left( \sum_{i \in [m]} B_i \right)^{t-1} & \cdots & B_m \left( \sum_{i \in [m]} B_i \right)^{t-1}
\end{bmatrix}.
\]
Write
\[
\sum_{i \in [m]} B_i = m \left[ I - \frac{1}{m} \sum_{i \in [m]} \frac{\mathscr{L}_i}{|E_i|} \right] = m \left[ I - \frac{1}{m \prod_{i \in [m]} |E_i|} \sum_{i \in [m]} \frac{\prod_{j \in [m]} |E_j|}{|E_i|} \mathscr{L}_i \right].
\]
Note that
\[
\sum_{i \in [m]} \frac{\prod_{j \in [m]} |E_j|}{|E_i|} \mathscr{L}_i
\]
is a Laplacian on a connected multigraph. Since the Laplacians are positive semidefinite, by Lemma~\ref{Lemma:PF_for_Laplacians}, 
\[
\lambda_1 \left( \sum_{i \in [m]} \frac{\prod_{j \in [m]} |E_j|}{|E_i|} \mathscr{L}_i \right) = 0
\]
is simple, so by Lemma~\ref{lemma:eigenvalue},
\begin{align*}
    &\lambda_n \left( I - \frac{1}{m \prod_{i \in [m]} |E_i|} \sum_{i \in [m]} \frac{\prod_{j \in [m]} |E_j|}{|E_i|} \mathscr{L}_i \right) \\
    &\hspace{1cm}= 1 - \frac{1}{m \prod_{i \in [m]} |E_i|} \lambda_1 \left( \sum_{i \in [m]} \frac{\prod_{j \in [m]} |E_j|}{|E_i|} \mathscr{L}_i \right) \\
    &\hspace{1cm}= 1 \quad \text{is simple.}
\end{align*}
Via Lemmas~\ref{lemma:eigenvalue},~\ref{lemma:sublinear}, and~\ref{lemma:largest eigenvalue of a Laplacian}, we have
\begin{align*}
    \lambda_1 \left( I - \frac{1}{m} \sum_{i \in [m]} \frac{\mathscr{L}_i}{|E_i|} \right) &= 1 - \frac{1}{m} \lambda_n \left( \sum_{i \in [m]} \frac{\mathscr{L}_i}{|E_i|} \right) \\
    &\geq 1 - \frac{1}{m} \sum_{i \in [m]} \frac{\lambda_n (\mathscr{L}_i)}{|E_i|} \\
    &\geq 1 - \frac{n}{\min_{i \in [m]} |E_i|} \\
    &\geq 1 - \frac{n}{n - 1} = -\frac{1}{n - 1} > -1 \quad \text{for} \quad n \geq 3.
\end{align*}
Therefore,
\[
-1 < \lambda_k \left( I - \frac{1}{m} \sum_{i \in [m]} \frac{\mathscr{L}_i}{|E_i|} \right) < 1 \quad \text{for} \quad 1 \leq k \leq n - 1.
\]
Letting
\[
C = I - \frac{1}{m} \sum_{i \in [m]} \frac{\mathscr{L}_i}{|E_i|},
\]
we can write
\[
C = P \, \text{diag} \left( 1, \lambda_{n-1}(C), \ldots, \lambda_1(C) \right) P^T
\]
where \( P \in M_n(\mathbb{R}) \) is orthonormal. Note that \( \frac{\mathbbm{1}}{\sqrt{n}} \) is the orthonormal eigenvector of \( C \) with eigenvalue 1 and is independent of \( t \). So,
\[
C^{t-1} \to P \, \text{diag} \left( 1, 0, \ldots, 0 \right) P^T = \frac{1}{n} \mathbbm{1} \mathbbm{1}^T \quad \text{as} \quad t \to \infty.
\]
Hence,
\[
\frac{1}{m^t} B_j \left( \sum_{j \in [m]} B_j \right)^{t-1} = \frac{1}{m} B_j C^{t-1} \to \frac{1}{nm} B_j \mathbbm{1} \mathbbm{1}^T = \frac{1}{nm} \mathbbm{1} \mathbbm{1}^T \quad \text{as} \quad t \to \infty.
\]
Thus,
\[
x_{ji}(t) \to \frac{1}{nm} \sum_{i \in [n]; j \in [m]} x_{ji}(0) \quad \text{as} \quad t \to \infty.
\]

\end{proof}
\section{Statements and Declarations}
\subsection{Competing Interests}
The author is partially funded by NSTC grant.

\subsection{Data availability}
No associated data was used.

\end{document}